%
\documentclass[runningheads]{llncs}
\usepackage[T1]{fontenc}
%
\usepackage{graphicx}
\usepackage[usenames,dvipsnames]{color}
%
%
\usepackage{amsmath}
\usepackage{amssymb}
\usepackage{cleveref}
\usepackage{tikz}
\usepackage{tikz-3dplot}
\usepackage{tikz-cd}
\usepackage{pgfplots}
\usepackage{mathtools}
\usepackage{commath} 
\usepackage{stmaryrd} 

\def\S{\mathbb{S}}

\def\NN{\mathbb{N}}

\def\RR{\mathbb{R}}
\def\CC{\mathbb{C}}
\def\Slr{\mathbb{S}\left(\ell_{\RR}^2 \right)}

\def\U{\mathcal{U}}
\def\Glt{\mathcal{G}^{\ell^2}}
\def\Gfr{\mathcal{G}^{\mathrm{FR}}}
\def\i{\mathrm{i}}

\def\d{\mathrm{d}}

\newcommand{\R}{\mathbb{R}}



\begin{document}
\title{Information Geometry on the $\ell^2$-Simplex via the $q$-Root Transform}
%
%
\author{
	Levin Maier\inst{1}$^{\ast}$
}
\renewcommand{\thefootnote}{\fnsymbol{footnote}}
\footnotetext[1]{Corresponding author}
\renewcommand{\thefootnote}{\arabic{footnote}}
\authorrunning{L. Maier}
%
\institute{%
	Heidelberg University, Im Neuenheimer Feld 205, 69120 Heidelberg, Germany\\
	\email{lmaier@mathi.uni-heidelberg.de}
}
\maketitle              
\begin{abstract}
In this paper, we introduce \emph{$\ell^p$-information geometry}, an infinite dimensional framework that shares key features with the geometry of the space of probability densities \( \mathrm{Dens}(M) \) on a closed manifold, while also incorporating aspects of measure-valued information geometry. We define the \emph{$\ell^2$-probability simplex} with a noncanonical differentiable structure induced via the \emph{$q$-root transform} from an open subset of the $\ell^q$-sphere. This structure renders the $q$-root map an \emph{isometry}, enabling the definition of \emph{Amari--Čencov $\alpha$-connections} in this setting.

We further construct \emph{gradient flows} with respect to the $\ell^2$--Fisher--Rao metric, which solve an infinite-dimensional linear optimization problem. These flows are intimately linked to an \emph{integrable Hamiltonian system} via a \emph{momentum map} arising from a Hamiltonian group action on the infinite-dimensional complex projective space.

\keywords{infinite-dimensional information geometry \and $\ell^p$-information geometry \and Amari--Čencov $\alpha$-connections \and integrable Hamiltonian systems \and infinite-dimensional linear programming}
\end{abstract}


\section{Introduction}\label{sec:introduction}
Infinite-dimensional information geometry—particularly the geometry of the space of smooth probability densities $\mathrm{Dens}(M)$ on a closed Riemannian manifold equipped with the so-called Fisher--Rao metric—has received considerable attention; see \cite{FisherRaoisunique,bauerLpFisherRaoMetricAmari2024,khesin_GAFA,khesin_geometry_2019} and the references therein. Another possible generalization of finite-dimensional information geometry, as studied in \cite{MR1800071,InfGeo_book} and the references therein, to the infinite-dimensional setting would be the Fisher--Rao geometry of infinite-dimensional probability simplices, where each point has infinitely many coordinate entries. A canonical candidate for this is the probability simplex in $\ell^2_{\mathbb{R}}$, called $\ell^2$-\emph{probability simplex} introduced in \Cref{s: 2}. Surprisingly, when equipped with its canonical differentiable structure inherited from the ambient space, the Fisher--Rao metric  fails to be well-defined. We overcome this issue by pulling back, via the so-called $q$-root transform \eqref{e: def q-root transform}, the differentiable structure of an open subset of the $\ell^q$ sphere to the probability simplex in $\ell^2$, which turns out to be stronger than rapid decay conditions on the sequences. This allows us to define the $\ell^2$-Fisher--Rao metric \eqref{e: defi l2 Fisher-Rao}—the $\ell^2$ analogue of the Fisher--Rao metric—as well as the $\ell^q$-Fisher--Rao metric \eqref{e: def q-Fisher-Rao metric} on the $\ell^2$-probability simplex. The latter serves as the $\ell^q$ analogue of the $L^q$-Fisher--Rao metric introduced in \cite{bauerLpFisherRaoMetricAmari2024}. 
Moreover, even more is true: when equipped with one of these metrics, the $\ell^2$-probability simplex is isometric to an open subset of the $\ell^q$ sphere equipped with the $\ell^q$ metric; see \Cref{t: square root map is isometry} for the $\ell^2$ case and \Cref{T: q-root transform as isom} for the $\ell^q$ case. These correspond to the $\ell^2$ and $\ell^q$ versions, respectively, of \cite[Thm.~3.1]{khesin_GAFA} and \cite[Thm.~4.10]{bauerLpFisherRaoMetricAmari2024}. Before moving on, the author hopes that the $\ell^2$-probability simplex, equipped with this pullback differentiable structure, can serve as a toy model for the Fisher--Rao geometry of the space of probability densities on non-compact, unbounded Riemannian manifolds.
As a first illustration of the previously developed $\ell^2$-information geometry, we extend in \Cref{s: 3} and \Cref{S: 4} the connection between the information-geometric perspective on linear programming problems and the totally integrable Hamiltonian system introduced in \cite{faybusovich_hamiltonian_1991} to the infinite-dimensional setting of $\ell^2$ geometry. Specifically, in \Cref{s: 3}, we solve an infinite-dimensional linear programming problem in the $\ell^2$ setting via a gradient flow with respect to the $\ell^2$-Fisher–Rao metric. In \Cref{t: integrability of Hamiltonian system}, we show that this gradient flow originates from a totally integrable, infinite-dimensional Hamiltonian system defined on an infinite-dimensional Kähler manifold. 
\section{Introducing $\ell^2$ and $\ell^q$-Information Geometry}\label{s: 2}
We begin by introducing the setting in detail. We denote by $\left(\ell^2_\mathbb{R}, \langle\cdot, \cdot\rangle_{\ell^2}\right)$ the space of real-valued sequences $(x_n)_{n\in \mathbb{N}} \subset \mathbb{R}$ satisfying $\sum_{n=0}^{\infty} x_n^2 < \infty.$
For convenience, we will later denote such sequences simply by $(x_n)$. This space is equipped with the inner product $\langle (x_n), (y_n)\rangle_{\ell^2} = \sum_{n=0}^{\infty} x_n y_n.$
So the unit sphere in $\ell^2_{\mathbb{R}}$ is 
\[
\mathbb{S}\left(\ell^2_{\mathbb{R}} \right)=\left\{(x_n)\in \ell_{\mathbb{R}}^2 : \sum_{n=0}^{\infty} x_n^2 = 1 \right\},
\]
with the tangent space at the point $(x_n) \in \mathbb{S}(\ell^2_{\mathbb{R}})$ given by
\[
T_{(x_n)} \mathbb{S}(\ell^2_{\mathbb{R}}) = \left\{(v_n) \in \ell_{\mathbb{R}}^2 : \langle (x_n), (v_n) \rangle_{\ell^2_{\mathbb{R}}} = 0 \right\}.
\]
In this setting, the round metric $\Glt$, respectively the $\ell^2$-metric, is precisely the restriction of $\langle\cdot, \cdot \rangle_{\ell^2}$ onto $\mathbb{S}(\ell^2_{\mathbb{R}})$. It is well known that \( (\mathbb{S}(\ell^2_{\mathbb{R}}), \Glt) \) is a strong Riemannian Hilbert manifold in the sense of \cite{LA99}; that is, \( \Glt \) induces a bundle isomorphism 
\(T\mathbb{S}(\ell^2_{\mathbb{R}}) \rightarrow T^*\mathbb{S}(\ell^2_{\mathbb{R}})\).
The open subset of strictly positive sequences is denoted by 
\begin{equation}\label{e: definition Ul2 }
	\U:=\left\{(x_n)\in \Slr:\  x_n>0 \quad \forall n\in \NN\right\},
\end{equation}
is as an open subset of $(\mathbb{S}(\ell^2_{\mathbb{R}}), g)$ equipped with the round metric $\Glt$ also an strong Riemannian Hilbert manifold. 
We move on by introducing the $\ell^2_{\mathbb{R}}$-analogue of the space of probability densities, the $\ell^2_{\mathbb{R}}$-\emph{probability simplex}, defined by 
\begin{equation}\label{e: definition l2probability simplex}
	\Delta := \left\{(p_n) \in \ell^1_{\mathbb{R}} : \sum_{n=0}^{\infty} p_n = 1 \quad \text{and} \quad p_n > 0 \quad \forall n \in \mathbb{N} \right\}.
\end{equation}
Note, that at this point, we have several choices for how to equip $\Delta$ with a differentiable structure. We choose the differentiable structure on $\Delta$ so that the following homeomorphism, called the \emph{square-root map}, becomes a diffeomorphism:
\[
	\varPhi: \Delta \longrightarrow \mathcal{U}, \quad (p_n) \mapsto (\sqrt{p_n}).
\]
This structure is completely different from the differentiable structure induced by the ambient space $\ell^2_{\mathbb{R}}$. The tangent space to $\Delta$ at a point $(p_n) \in \Delta$, with respect to this differentiable structure, is given by
\begin{equation}\label{d: differentiable structure}
    T_{(p_n)}\Delta = \left\{ (v_n) \in \ell^1_{\mathbb{R}} : \sum_{n=0}^{\infty} v_n = 0 \quad \text{and} \quad \left(\frac{v_n}{\sqrt{p_n}}\right) \in \ell^2_{\mathbb{R}} \right\}.
\end{equation}
By introducing the $\ell^2$-\emph{Fisher--Rao} metric as:
\begin{equation}\label{e: defi l2 Fisher-Rao}
	\mathcal{G}^{\mathrm{FR}}_{(p_n)}\left((v_n), (w_n) \right) := \frac{1}{4}\sum_{n=0}^{\infty} \frac{v_n \cdot w_n}{p_n}, \quad \forall (v_n), (w_n) \in T_{(p_n)}\Delta,
\end{equation}
the space $(\Delta, \mathcal{G}^{\mathrm{FR}})$ becomes a strong Riemannian Hilbert manifold. Before we move on, we note the following:

\begin{remark}\label{r: the right diff strcuture}
A condition similar to the one on \( \left(\frac{v_n}{\sqrt{p_n}}\right) \) in \eqref{d: differentiable structure} also appears in \cite[§2]{Friedrich1991}. Without this condition, the \( \ell^2 \)-Fisher–Rao metric \eqref{e: defi l2 Fisher-Rao} is not well-defined. Even under decay assumptions analogous to those in \cite[§3.5]{MichorMumfordZoo}, one can construct rapidly decaying sequences in \( \Delta \) such that the \( \ell^2 \)-Fisher–Rao metric fails to be finite on all tangent vectors. 
\end{remark}
Subject to our chosen differentiable structure in \eqref{d: differentiable structure}, and by a computation similiar to \cite[Thm. 3.1]{khesin_GAFA}, it follows directly that:

\begin{theorem}\label{t: square root map is isometry}
The square-root map \( \varPhi \) defined by
\[
\varPhi: (\Delta, \mathcal{G}^{\mathrm{FR}}) \longrightarrow (\mathcal{U}, \mathcal{G}^{\mathrm{L}^2}), \quad (p_n) \mapsto (\sqrt{p_n}),
\]
is an isometry.
\end{theorem}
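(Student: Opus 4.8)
The plan is to verify directly that $\varPhi$ pulls back the round metric $\Glt$ on $\U$ to the $\ell^2$-Fisher--Rao metric $\Gfr$ on $\Delta$. Since $\varPhi$ is already known to be a diffeomorphism (this is how the differentiable structure on $\Delta$ was defined), it suffices to compute its differential and check the metric identity pointwise. First I would fix $(p_n)\in\Delta$ and a tangent vector $(v_n)\in T_{(p_n)}\Delta$, as described in \eqref{d: differentiable structure}, and compute $\d\varPhi_{(p_n)}(v_n)$. Formally differentiating the $n$-th coordinate $p_n\mapsto\sqrt{p_n}$ gives
\[
  \d\varPhi_{(p_n)}(v_n) = \left(\frac{v_n}{2\sqrt{p_n}}\right).
\]
The condition $(v_n/\sqrt{p_n})\in\ell^2_{\RR}$ built into \eqref{d: differentiable structure} is exactly what guarantees this image sequence lies in $\ell^2_{\RR}$; and the condition $\sum_n v_n = 0$, together with $\sum_n p_n = 1$, is what places the image in the tangent space $T_{(\sqrt{p_n})}\U = \{(w_n)\in\ell^2_{\RR} : \sum_n \sqrt{p_n}\,w_n = 0\}$, since $\sum_n \sqrt{p_n}\cdot\frac{v_n}{2\sqrt{p_n}} = \tfrac12\sum_n v_n = 0$.

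Next I would compute the pullback metric. For $(v_n),(w_n)\in T_{(p_n)}\Delta$,
\[
  \left(\varPhi^*\Glt\right)_{(p_n)}\big((v_n),(w_n)\big)
  = \big\langle \d\varPhi_{(p_n)}(v_n),\, \d\varPhi_{(p_n)}(w_n)\big\rangle_{\ell^2}
  = \sum_{n=0}^{\infty} \frac{v_n}{2\sqrt{p_n}}\cdot\frac{w_n}{2\sqrt{p_n}}
  = \frac14\sum_{n=0}^{\infty}\frac{v_n w_n}{p_n},
\]
which is precisely $\Gfr_{(p_n)}\big((v_n),(w_n)\big)$ as defined in \eqref{e: defi l2 Fisher-Rao}. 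Combined with the fact that $\varPhi$ is a diffeomorphism, this establishes that $\varPhi$ is an isometry. I would also note in passing that the series on the right converges absolutely: by Cauchy--Schwarz in $\ell^2_{\RR}$, $\sum_n |v_n w_n|/p_n \le \|(v_n/\sqrt{p_n})\|_{\ell^2}\,\|(w_n/\sqrt{p_n})\|_{\ell^2} < \infty$, so every step above is justified.

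The only genuinely non-routine point is making the differential computation rigorous at the level of the Hilbert-manifold charts rather than coordinate-wise: one should confirm that the coordinatewise derivative $(v_n)\mapsto(v_n/(2\sqrt{p_n}))$ really is the Fréchet derivative of $\varPhi$ with respect to the chosen differentiable structure on $\Delta$. But this is immediate from the construction --- $\Delta$ was given the unique differentiable structure making $\varPhi$ a diffeomorphism onto the open subset $\U$ of the Hilbert manifold $\Slr$, so in the corresponding charts $\varPhi$ is (locally) the identity, and \eqref{d: differentiable structure} is exactly the transported tangent space $\{(v_n) : (v_n/(2\sqrt{p_n}))\in T_{(\sqrt{p_n})}\U\}$. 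Hence the verification reduces to the elementary computation above, and the theorem follows along the lines of \cite[Thm.~3.1]{khesin_GAFA}.
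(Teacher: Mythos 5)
Your computation is correct and is exactly the argument the paper has in mind: the paper itself gives no further details beyond invoking ``a computation similar to \cite[Thm.~3.1]{khesin_GAFA}'', and your verification that $\d\varPhi_{(p_n)}(v_n)=\bigl(v_n/(2\sqrt{p_n})\bigr)$ lands in $T_{(\sqrt{p_n})}\U$ and pulls $\Glt$ back to $\tfrac14\sum_n v_nw_n/p_n$ is precisely that computation. Your closing remarks on absolute convergence via Cauchy--Schwarz and on the role of the transported differentiable structure \eqref{d: differentiable structure} make the argument more complete than the paper's own presentation.
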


\begin{remark}
This result can be seen as a blend of \cite[Thm.~3.1]{khesin_GAFA} and the methods in \cite{Friedrich1991}, but does not follow immediately from them. Accordingly, \Cref{t: square root map is isometry} provides yet another infinite-dimensional analogue of \cite[Proposition~2.1]{InfGeo_book}, and it relies crucially on the differentiable structure chosen in \eqref{d: differentiable structure}, as emphasized in \Cref{r: the right diff strcuture}.
Interestingly, finite-dimensional probability simplices $\Delta^N $ cannot be embedded as totally geodesic submanifolds into the infinite-dimensional simplex \( \Delta \). Indeed, \( \Delta^N \subseteq \partial \Delta \), which illustrates how \( \ell^2_{\mathbb{R}} \)-information geometry differs fundamentally from its finite-dimensional counterpart.
Moreover, an element of \( \Delta \) can be interpreted as a discretization of a probability measure subject to infinitely many measurements.
Note that \Cref{t: square root map is isometry} is effectively illustrated by the \Cref{fig:sqrt-map}.
\end{remark}
\begin{figure}[ht]
	\centering
\includegraphics{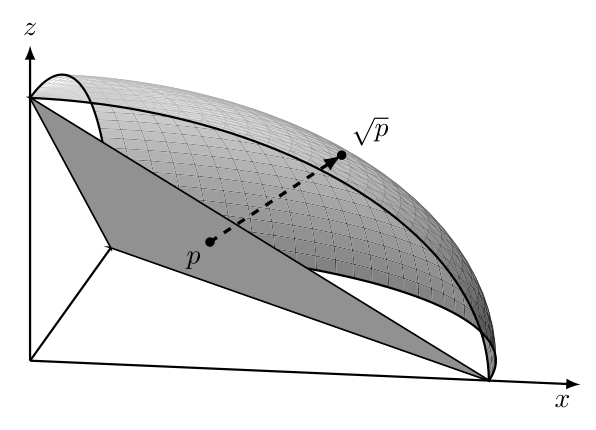}
	\caption{Illustration of the square-root map as an isometry.}
	\label{fig:sqrt-map}
\end{figure}
A natural question that arises for readers familiar with infinite-dimensional information geometry is the following:  
What is the analogue of \cite{FisherRaoisunique} in the setting of $\ell^2$ information geometry?

As a first illustration of \Cref{t: square root map is isometry}, we observe that $(\Delta, \Gfr)$ is geodesically convex; that is, for every pair of points, there exists a length-minimizing geodesic connecting them. This follows directly from the well-known fact that $(\Slr, \Glt)$ is geodesically convex, and consequently, $(\U, \Glt)$ is also geodesically convex.  
Therefore, by \Cref{t: square root map is isometry}, we obtain:
\begin{corollary}
	The $\ell^2$-probability simplex $\Delta$, equipped with the $\ell^2$-Fisher–Rao metric $\Gfr$, is geodesically convex. 
\end{corollary}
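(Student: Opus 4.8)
The plan is to deduce the corollary directly from \Cref{t: square root map is isometry} together with the known geodesic convexity of the round $\ell^2$-sphere. Recall that an isometry between strong Riemannian Hilbert manifolds maps length-minimizing geodesics to length-minimizing geodesics and preserves the Riemannian distance function. Hence it suffices to establish two facts: first, that $(\Slr, \Glt)$ is geodesically convex; second, that the open subset $\U \subset \Slr$ of strictly positive sequences is itself geodesically convex as a Riemannian submanifold (i.e. any two points of $\U$ are joined by a minimizing geodesic that stays inside $\U$). Granting these, the isometry $\varPhi$ transports a minimizing geodesic in $\U$ between $\varPhi(p)$ and $\varPhi(q)$ back to a minimizing geodesic in $\Delta$ between $p$ and $q$, which is exactly geodesic convexity of $(\Delta, \Gfr)$.

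For the first fact, I would invoke the standard description of the geometry of the unit sphere in an infinite-dimensional Hilbert space: geodesics are great circles, parametrized by $\gamma(t) = \cos(t)\, x + \sin(t)\, v$ for a unit tangent vector $v \perp x$, and the distance between $x, y \in \Slr$ is $\arccos\langle x, y\rangle_{\ell^2} \in [0,\pi]$. Two non-antipodal points $x, y$ (equivalently $\langle x, y\rangle_{\ell^2} > -1$) lie on a common great circle and are joined by a minimizing geodesic arc of length $\arccos\langle x,y\rangle_{\ell^2}$; this is classical and holds verbatim in the separable Hilbert space setting since the relevant computations take place in the two-dimensional subspace $\operatorname{span}\{x, y\}$. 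So $(\Slr, \Glt)$ is geodesically convex.

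For the second fact — which I expect to be the only genuine (if mild) obstacle — observe that for $x = (x_n)$ and $y = (y_n)$ in $\U$ we have $x_n, y_n > 0$ for all $n$, so $\langle x, y \rangle_{\ell^2} = \sum_n x_n y_n > 0$, in particular $x$ and $y$ are never antipodal and in fact $\arccos\langle x,y\rangle_{\ell^2} < \pi/2$. The minimizing great-circle arc is $\gamma(t) = \tfrac{1}{\sin\theta}\big(\sin(\theta - t)\, x + \sin(t)\, y\big)$ for $t \in [0,\theta]$ with $\theta = \arccos\langle x,y\rangle_{\ell^2} \in (0, \pi/2)$, and since $\theta - t \ge 0$ and $t \ge 0$ with $\sin$ nonnegative on $[0,\pi]$, every coordinate $\gamma_n(t) = \tfrac{1}{\sin\theta}(\sin(\theta-t)x_n + \sin(t)y_n)$ is a nonnegative combination of the strictly positive numbers $x_n, y_n$; it is strictly positive because at each $t \in [0,\theta]$ at least one of $\sin(\theta - t), \sin t$ is strictly positive. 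Hence $\gamma(t) \in \U$ for all $t$, so $\U$ is geodesically convex in $(\Slr, \Glt)$. (Equivalently, one may remark that $\U = \varPhi(\Delta)$ corresponds under the isometry to the positivity condition $p_n > 0$, and convexity of this condition along the above arc is what makes the argument work.)

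Finally, putting the pieces together: given $p, q \in \Delta$, set $x = \varPhi(p)$, $y = \varPhi(q) \in \U$, take the minimizing geodesic $\gamma$ in $\U$ from $x$ to $y$ provided by the second fact, and pull it back to $c := \varPhi^{-1}\circ\gamma$, a curve in $\Delta$ from $p$ to $q$. Since $\varPhi$ is an isometry by \Cref{t: square root map is isometry}, $c$ is a geodesic for $\Gfr$ and has length equal to that of $\gamma$, which equals the $\U$-distance between $x$ and $y$; as $\varPhi$ also preserves distances, this is the $\Delta$-distance between $p$ and $q$, so $c$ is length-minimizing. Therefore $(\Delta, \Gfr)$ is geodesically convex, completing the proof.
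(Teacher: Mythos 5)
Your proposal is correct and follows the same route as the paper, which simply asserts that geodesic convexity of $(\Delta,\Gfr)$ follows from the well-known geodesic convexity of $(\Slr,\Glt)$, hence of $(\U,\Glt)$, transported through the isometry $\varPhi$ of \Cref{t: square root map is isometry}. The only difference is that you explicitly verify the step the paper leaves implicit --- namely that the minimizing great-circle arc between two points of $\U$ actually stays inside $\U$ (using $\langle x,y\rangle_{\ell^2}>0$ and positivity of the slerp coefficients) --- which is a worthwhile addition since an open subset of a geodesically convex manifold is not automatically geodesically convex.
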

\begin{remark}
	At this point, we note that the Hopf–Rinow theorem is quite delicate in the context of infinite-dimensional Riemannian Hilbert manifolds, as it does not hold in full generality \cite{HopfrinowfalseAktkin},\cite{HopfrinowfalseEkeland}.  
	Moreover, geodesic completeness does not necessarily imply geodesic convexity. Interestingly, the latter does hold for half-Lie groups equipped with a strong right-invariant Riemannian metric \cite{Bauer_2025}.
\end{remark}
We conclude this section by presenting the $\ell^q_{\RR}$-analogue of the recent and intriguing work of Bauer–Le Brigant–Lu–Maor~\cite{bauerLpFisherRaoMetricAmari2024}. Here, $\ell^q_{\RR}$ denotes the space of all real sequences $(x_n)$ such that the $\ell^q_{\RR}$-norm $\Vert \cdot \Vert_{\ell^q}$ is finite, i.e.,
$\Vert (x_n) \Vert_{\ell^q} := \left( \sum_{n=0}^{\infty} x_n^q \right)^{1/q} < \infty,
$ where $q \in (1, \infty)$. We adopt the notation $q$ instead of $p$, as used in~\cite{bauerLpFisherRaoMetricAmari2024}, to avoid confusion with elements $(p_n) \in \Delta$. 

We define $\mathcal{U}_q$ as an open subset of the $\ell^q_{\RR}$ unit sphere $\mathbb{S}(\ell^q_{\RR})$, given by
\[
\mathcal{U}_q := \left\{ (x_n) \in \mathbb{S}(\ell^q_{\RR}) :\ x_n > 0 \quad \forall n \in \mathbb{N} \right\}.
\]
We equip $\Delta$ with a differentiable structure such that the homeomorphism, called the \emph{$q$-root transform}, defined by
\begin{equation}\label{e: def q-root transform}
    \varPhi_q: \Delta \longrightarrow \mathcal{U}_q, \quad (p_n) \mapsto \left(p_n^{1/q}\right),
\end{equation}
becomes a diffeomorphism. For $q \neq 2$, we denote by $\Delta^q$ the set $\Delta$ equipped with this differentiable structure. The tangent space at a point $(p_n) \in \Delta^q$ is given by
\begin{equation*}\label{e: q diff structure on delta}
	T_{(p_n)}\Delta^q := \left\{ (v_n) \in \ell^1_{\mathbb{R}} : \sum_{n=0}^{\infty} v_n = 0 \quad \text{and} \quad \left(v_n p_n^{(1/q - 1)}\right) \in \ell^q_{\mathbb{R}} \right\}.
\end{equation*}
We introduce the $\ell^q$-Fisher–Rao metric, which is a Finsler metric for any $q \in (1, \infty)$, by
\begin{equation}\label{e: def q-Fisher-Rao metric}
    \mathcal{F}^q_{(p_n)}\left( (v_n) \right) := \left( \sum_{n=0}^{\infty} \left\vert \frac{v_n}{p_n} \right\vert^q \cdot p_n \right)^{\frac{1}{q}},
    \quad \forall\ ((p_n), (v_n)) \in T\Delta^q.
\end{equation}
Following a computation along the lines of~\cite[Thm.~3.12]{bauerLpFisherRaoMetricAmari2024}, we obtain the following:
\begin{theorem}\label{T: q-root transform as isom}
    The $q$-root transform $\varPhi_q$ is an isometry between $\left( \Delta^q, \mathcal{F}^q \right)$ and $\left( \mathcal{U}_q, \Vert \cdot \Vert_{\ell^q} \right)$.
\end{theorem}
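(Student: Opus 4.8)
The plan is to compute the differential of $\varPhi_q$ at an arbitrary point $(p_n) \in \Delta^q$ and verify directly that it pulls back the $\ell^q$-norm on $T_{\varPhi_q(p_n)}\mathcal{U}_q$ to the Finsler norm $\mathcal{F}^q_{(p_n)}$ defined in \eqref{e: def q-Fisher-Rao metric}. Since $\varPhi_q(p_n) = (p_n^{1/q})$ acts coordinatewise, its differential at $(p_n)$ sends a tangent vector $(v_n) \in T_{(p_n)}\Delta^q$ to $\left( \tfrac{1}{q} p_n^{1/q - 1} v_n \right)$. First I would check that this is well-defined, i.e.\ that $\left( \tfrac{1}{q} p_n^{1/q-1} v_n \right)$ indeed lies in $T_{\varPhi_q(p_n)}\mathcal{U}_q = \{(w_n) \in \ell^q_{\RR} : \sum_n x_n^{q-1} w_n = 0\}$ where $x_n = p_n^{1/q}$: membership in $\ell^q_{\RR}$ is exactly the condition $\left(v_n p_n^{1/q-1}\right) \in \ell^q_{\RR}$ from the definition of $T_{(p_n)}\Delta^q$, and the linear constraint $\sum_n x_n^{q-1} w_n = \tfrac{1}{q}\sum_n p_n^{(q-1)/q} p_n^{1/q-1} v_n = \tfrac{1}{q}\sum_n v_n = 0$ follows from the simplex constraint. (One should also confirm the dual description of $T_{\mathcal{U}_q}$: differentiating $\sum_n x_n^q = 1$ gives $\sum_n x_n^{q-1} w_n = 0$.)

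Next I would carry out the norm computation. Plugging $w_n = \tfrac{1}{q} p_n^{1/q-1} v_n$ into $\Vert \cdot \Vert_{\ell^q}$,
\begin{equation*}
\Vert \mathrm{d}\varPhi_q(v_n) \Vert_{\ell^q}^q = \sum_{n=0}^{\infty} \left| \tfrac{1}{q} p_n^{1/q - 1} v_n \right|^q = \frac{1}{q^q} \sum_{n=0}^{\infty} p_n^{1 - q} |v_n|^q = \frac{1}{q^q} \sum_{n=0}^{\infty} \left| \frac{v_n}{p_n} \right|^q p_n = \frac{1}{q^q}\, \mathcal{F}^q_{(p_n)}\!\left((v_n)\right)^q,
\end{equation*}
so $\Vert \mathrm{d}\varPhi_q(v_n) \Vert_{\ell^q} = \tfrac{1}{q}\,\mathcal{F}^q_{(p_n)}((v_n))$. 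Strictly speaking this shows $\varPhi_q$ is a homothety rather than an isometry; I would either absorb the constant $1/q$ into the definition \eqref{e: def q-Fisher-Rao metric} (noting the $\tfrac14$ in the $\ell^2$ case \eqref{e: defi l2 Fisher-Rao} is the analogue, since $\tfrac{1}{q^q} = \tfrac14$ when $q=2$ — indeed $\mathcal F^2 = 2\mathcal G^{\mathrm{FR},1/2}$ up to the square on the metric), or simply remark that the scaling is immaterial and follow the normalization of \cite[Thm.~3.12]{bauerLpFisherRaoMetricAmari2024}. This bookkeeping of constants is worth stating explicitly so the reader sees the consistency with \Cref{t: square root map is isometry}.

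Finally, to conclude that $\varPhi_q$ is an isometry of Finsler manifolds I would note that $\varPhi_q$ is already a diffeomorphism by the very choice of differentiable structure on $\Delta^q$, so only the infinitesimal norm-preservation just established is needed; the inverse $\varPhi_q^{-1}\colon (y_n) \mapsto (y_n^q)$ is then automatically norm-preserving as well. The one genuine subtlety — the main obstacle — is making sure every interchange is legitimate at the level of the \emph{strong} Finsler structure: one must verify that $\mathrm{d}\varPhi_q$ is a bounded linear isomorphism $T_{(p_n)}\Delta^q \to T_{\varPhi_q(p_n)}\mathcal{U}_q$ with bounded inverse (not merely a formal bijection of sequence spaces), and that the Finsler norm $\mathcal F^q$ is actually finite and nondegenerate on the tangent space as described — which is precisely why the decay condition $\left(v_n p_n^{1/q-1}\right)\in \ell^q_\RR$ was built into the definition of $T_{(p_n)}\Delta^q$ in the first place, mirroring \Cref{r: the right diff strcuture}. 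Once that is in place, the computation above is essentially the whole proof, exactly parallel to \cite[Thm.~3.12]{bauerLpFisherRaoMetricAmari2024} and to the $\ell^2$ case of \Cref{t: square root map is isometry}.
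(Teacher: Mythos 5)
Your computation is exactly the argument the paper intends (the paper itself only gestures at ``a computation along the lines of''~\cite[Thm.~3.12]{bauerLpFisherRaoMetricAmari2024}), and checking that $\mathrm{d}\varPhi_q$ lands in the correct tangent space of $\mathcal{U}_q$ is the right preliminary step. Your observation about the constant is a genuine catch: with \eqref{e: def q-root transform} and \eqref{e: def q-Fisher-Rao metric} exactly as written, $\varPhi_q$ is a homothety of ratio $1/q$ rather than an isometry, so either $\mathcal{F}^q$ should carry a prefactor $1/q$ (the analogue of the $1/4$ in \eqref{e: defi l2 Fisher-Rao}, which there multiplies the \emph{squared} norm) or the target should be rescaled, e.g.\ to the sphere of radius $q$ as in the $L^p$ setting of \cite{bauerLpFisherRaoMetricAmari2024}.
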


\begin{remark}\label{r: alpha connections existence}
    The construction of Amari–Čencov $\alpha$-connections in~\cite[Lemma~4.1]{bauerLpFisherRaoMetricAmari2024} and the Chern connection in~\cite[Thm.~4.6]{bauerLpFisherRaoMetricAmari2024} can also be adapted to $(\Delta^q, \mathcal{F}^q)$ using \Cref{T: q-root transform as isom}, but this is omitted here for reasons of space.
\end{remark}

\section{$\ell^2$-Information geometry of an optimization problem}\label{s: 3}

The aim of this section is to use the previously developed geometric framework, $\ell^2$-information geometry, to extend the finite-dimensional information geometry approach from \cite{faybusovich_hamiltonian_1991}, originally used to solve finite-dimensional linear programming problems, to the infinite-dimensional setting. 
We consider the following linear programming (LP) problem on the closure $\bar{\Delta}$ of the $\ell^2$-probability simplex:  
\begin{equation}\tag{LP}\label{eq:ell2_LPP}
	\max_{(p_n)\in \bar{\Delta}} \langle (c_n), (p_n) \rangle_{\ell^2_\R}, \quad (c_n)\in \ell_2^\R.
\end{equation}
By defining the smooth function \( F_{(c_n)}(p_n) := \langle (c_n), (p_n) \rangle_{\ell^2} \) on \( (\Delta, \mathcal{G}) \), the problem \eqref{eq:ell2_LPP} can be understood as finding the maximum of this function on \( \bar{\Delta} \). We now describe the gradient flow lines of \( F_{(c_n)} \) and prove that they converge exponentially fast to solutions of \eqref{eq:ell2_LPP}:

\begin{proposition}
    \label{prop:integral-curves}
Let \( t \mapsto (p_n)(t) \) be a gradient flow line of \( F_{(c_n)} \) with initial value \( (p_n)(0) = (p_n)_0 \in \Delta \). Then, for all \( n \in \mathbb{N} \) and \( t \in [0, \infty) \), the components of \( (p_n)(t) \) are given by
\begin{equation} \label{eq:integral-curves}
	p_n(t) = \frac{p_n(0)\,e^{c_n t}}{\sum_{k=0}^{\infty} p_k(0)\,e^{c_k t}}.
\end{equation}
If in addition the sequence $(c_n)$ in \eqref{eq:ell2_LPP} is strictly monotonic decreasing, then the limit
\[
p_{\max} := \lim_{t \to \infty} (p_n)(t)
\]
exists and is a solution of \eqref{eq:ell2_LPP}.
\end{proposition}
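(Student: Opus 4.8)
The plan is to write down and then explicitly integrate the gradient flow equation of $F_{(c_n)}$, and to read the convergence claim off the resulting closed form. To identify the flow I would compute $\nabla^{\mathcal G}F_{(c_n)}$ from its defining relation $\mathcal G_{(p_n)}\bigl(\nabla^{\mathcal G}F_{(c_n)},(v_n)\bigr)=\langle(c_n),(v_n)\rangle_{\ell^2}$ for all $(v_n)\in T_{(p_n)}\Delta$. Since $T_{(p_n)}\Delta$ is cut out by the single linear constraint $\sum_nv_n=0$, a Lagrange multiplier computation gives $(\nabla^{\mathcal G}F_{(c_n)})_n=p_n(c_n-\lambda)$, and imposing $\sum_n(\nabla^{\mathcal G}F_{(c_n)})_n=0$ together with $\sum_np_n=1$ forces $\lambda=\sum_np_nc_n=F_{(c_n)}((p_n))$. (Alternatively one transports through the isometry $\varPhi$ of \Cref{t: square root map is isometry} the classical Rayleigh-quotient gradient flow of $\tilde F(x_n):=\sum_nc_nx_n^2=\langle Cx,x\rangle_{\ell^2}$ on $(\mathcal{U},\Glt)\subset\Slr$, where $C=\mathrm{diag}(c_n)$ is bounded because $(c_n)\in\ell^2_\R\subset\ell^\infty$.) Up to the positive constant relating $\mathcal G$ to $\Gfr$ in \eqref{e: defi l2 Fisher-Rao}, which only reparametrizes time, the gradient flow equation is thus $\dot p_n=p_n\bigl(c_n-F_{(c_n)}((p_n))\bigr)$.

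It then remains to check that \eqref{eq:integral-curves} is its solution. First I would confirm that, for every $t\ge 0$, the right-hand side of \eqref{eq:integral-curves} actually lies in $\Delta$ with the differentiable structure \eqref{d: differentiable structure}: the denominator $Z(t):=\sum_kp_k(0)e^{c_kt}$ converges with $e^{-\Vert(c_n)\Vert_{\ell^\infty}t}\le Z(t)\le e^{\Vert(c_n)\Vert_{\ell^\infty}t}$, the entries are strictly positive, and they sum to $1$. Differentiating $Z$ term by term — legitimate because $\sum_kc_kp_k(0)e^{c_kt}$ converges uniformly on compact $t$-intervals — gives $\dot Z/Z=\sum_kc_kp_k(t)=F_{(c_n)}((p_n)(t))$, whence $\dot p_n=c_np_n-p_n\dot Z/Z=p_n\bigl(c_n-F_{(c_n)}((p_n)(t))\bigr)$, as required. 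I would then check that the velocity lies in $T_{(p_n)}\Delta$, i.e.\ $\sum_n\dot p_n=0$ and $\bigl(\dot p_n/\sqrt{p_n}\bigr)_n=\bigl(\sqrt{p_n(t)}\,(c_n-F_{(c_n)}((p_n)(t)))\bigr)_n\in\ell^2_\R$, and that $t\mapsto(\sqrt{p_n(t)})$ is a $C^1$ curve into $\Slr$. Since $\Gfr$ (hence $\mathcal G$) is a strong metric, the gradient of the smooth function $F_{(c_n)}$ is a smooth vector field and its flow lines are unique, so \eqref{eq:integral-curves} is the gradient flow line.

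For the second claim I would first pin down the optimum of \eqref{eq:ell2_LPP}. Strict monotone decrease of $(c_n)$ gives $c_0>c_n$ for all $n\ge 1$ (and $c_n\to 0$ since $(c_n)\in\ell^2_\R$), so on $\bar{\Delta}=\{(p_n)\in\ell^1_\R:p_n\ge 0,\ \sum_np_n=1\}$ one has $\langle(c_n),(p_n)\rangle_{\ell^2}\le c_0\sum_np_n=c_0$, with equality exactly when $(p_n)=(1,0,0,\dots)=:\delta_0$; hence $\delta_0\in\bar{\Delta}$ is the unique maximizer, with optimal value $c_0$. Then I would show $(p_n)(t)\to\delta_0$ in $\ell^1_\R$: from \eqref{eq:integral-curves}, $p_0(t)=p_0(0)\big/\bigl(p_0(0)+\sum_{k\ge 1}p_k(0)e^{(c_k-c_0)t}\bigr)$, and since $c_k-c_0\le c_1-c_0<0$, dominated convergence (against the summable sequence $(p_k(0))_{k\ge 1}$) gives $\sum_{k\ge 1}p_k(0)e^{(c_k-c_0)t}\to 0$, so $p_0(t)\to 1$. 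As the entries are nonnegative with total mass $1$, $\Vert(p_n)(t)-\delta_0\Vert_{\ell^1}=2\bigl(1-p_0(t)\bigr)\to 0$, and in fact $1-p_0(t)\le p_0(0)^{-1}e^{-(c_0-c_1)t}$, which gives the announced exponential rate. Therefore $p_{\max}=\delta_0$ exists and solves \eqref{eq:ell2_LPP}.

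The step I expect to be the main obstacle is the functional-analytic bookkeeping in the first part: although \eqref{eq:integral-curves} is plainly a smooth family of sequences, one must verify that it is smooth as a curve into the Hilbert manifold $\Delta$ equipped with the non-canonical structure \eqref{d: differentiable structure} and that its velocity lies in the correspondingly non-canonical tangent space — precisely the subtlety highlighted in \Cref{r: the right diff strcuture} — and all term-by-term differentiations and limit interchanges must be controlled uniformly against the fixed summable sequence $(p_n(0))$. A secondary point worth flagging is that $p_{\max}=\delta_0\in\partial\Delta$ lies outside the manifold $\Delta$, so the limit is to be read in the ambient $\ell^1_\R$-topology; moreover the strict-monotonicity hypothesis is exactly what makes the $\argmax$ unique and concentrated at the first index, so that the flow — which always admits the closed form \eqref{eq:integral-curves} — genuinely converges, and to the optimizer. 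Without it one may have ties or a supremum that is not attained, and then the limit need not exist or need not solve \eqref{eq:ell2_LPP}.
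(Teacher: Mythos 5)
Your proposal is correct and follows essentially the same route as the paper's Appendix~A: identify the Fisher--Rao gradient field $\dot p_n = p_n\bigl(c_n - \langle (c_n),(p_n)\rangle\bigr)$ (the paper obtains it by projecting $\nabla\langle (c_n),(x_n^2)\rangle$ on the sphere and pulling back through $\varPhi$, which you mention as your alternative derivation; your Lagrange-multiplier computation gives the same vector field), verify the closed form \eqref{eq:integral-curves} by direct differentiation, and conclude convergence to the corner $(1,0,\dots)$. Your treatment of the convergence step (unique maximizer, dominated convergence, explicit exponential rate in $\ell^1$) and of the tangency/regularity bookkeeping is in fact more detailed than what the paper records.
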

\begin{remark}
  To the authors' best knowledge, an analogue of \Cref{prop:integral-curves} is not known for the space of probability densities on closed manifolds equipped with the Fisher–Rao metric. In an extended version of the present article, this analysis will be carried out in detail, and the case of the space of probability densities on non-compact manifolds without boundary will also be included.
\end{remark}
\begin{proof}
The proof closely follows the argument in \cite[p.~2]{faybusovich_hamiltonian_1991}, with a minor but easily verifiable adaptation. Specifically, the gradient of \( F_{(c_n)}(p_n) := \langle (c_n), (p_n) \rangle_{\ell^2} \) on \( (\Delta, \mathcal{G}) \) satisfies a global Lipschitz condition and is invariant under reparametrizations of the form \( (c_n) \mapsto (c_n - \| (c_n) \|) \). For the sake of completeness, the full details are provided in \Cref{Appendix proof Prop}.
\end{proof}
We close this section with a note of independent interest. In finite-dimensional information geometry, the analogues of the gradient flow lines in \Cref{prop:integral-curves} are known as $e$-geodesics—geodesics with respect to a certain affine connection.

\textbf{Conjecture.} — In analogy with finite-dimensional information geometry \cite{MR1800071,InfGeo_book}, we conjecture that the gradient flow lines in \Cref{prop:integral-curves} can be interpreted as $e$-geodesics with respect to a suitable affine connection on \( \Delta \).

\section{An Infinite dimensional integrable Hamiltonian system}\label{S: 4}
The aim of this section is to explore the Hamiltonian nature of the gradient flows in \Cref{s: 3}  to study the underlying symmetries.  

For this, we denote the space of complex-valued sequences $(z_n) \subset \mathbb{C}$ such that $\| (z_n)\|_{\ell^2}:=\sum_{n=0}^{\infty} |z_n|^2 < \infty$  
by $\ell_{\mathbb{C}}^2$, equipped with the standard Hermitian $\ell^2$-inner product $\langle \cdot, \cdot \rangle_{\ell^2}$,  
which defines an infinite-dimensional Kähler manifold, as  
$\Re \langle \mathrm{i} \cdot, \cdot \rangle_{\ell^2} = \Im \langle \cdot, \cdot \rangle_{\ell^2}$,  
where $\mathrm{i} = \sqrt{-1}$.
The action $\mathbb{S}^1 \curvearrowright \ell_{\mathbb{C}}^2$, where $\S^1=\{e^{\i t}: t\in \RR\}$,  given by $e^{it} \cdot (z_n) := (e^{it}z_n)$ acts by Kähler morphisms and is Hamiltonian, with momentum map 
\begin{equation} \label{eq:S1_momentmap_cpinf}
	\mu_{\mathbb{S}^1} : \ell^2_{\mathbb{C}} \longrightarrow  \mathrm{i} \mathbb{R}, \quad (z_n) \mapsto \mathrm{i} \langle (z_n), (z_n) \rangle_{\ell^2}.
\end{equation}  
Since $\mathrm{i}$ is a regular value of $\mu_{\mathbb{S}^1}$, we obtain by Kähler reduction that the quotient  
$\mu_{\mathbb{S}^1}^{-1}(\mathrm{i}) / \mathbb{S}^1$ is a Kähler manifold. As $\mu_{\mathbb{S}^1}^{-1}(\mathrm{i})$ is precisely the unit sphere $\mathbb{S}(\ell_{\mathbb{C}}^2)$ in $\ell_{\mathbb{C}}^2$, we obtain that  
$\mu_{\mathbb{S}^1}^{-1}(\mathrm{i}) / \mathbb{S}^1 = \mathbb{CP}^{\infty}$, where the Kähler structure we obtained is precisely the Fubini–Study metric $\mathcal{G}^{\mathrm{FS}}$ and the Fubini–Study form $\Omega^{\mathrm{FS}}$. We refer, for example, to \cite[§2]{khesin_geometry_2019} for the explicit form of this Kähler structure. 
We identify the infinite-dimensional torus $\mathbb{T}^{\infty} := \prod_{n=1}^{\infty} \mathbb{S}^1$, where each element in $\mathbb{T}^{\infty}$ is of the form $(e^{\mathrm{i} t_n})$, which inhereits it's topology naturally  as a subgroup of the diagonal unitary operators of $\ell^2_{\CC}$ by identifying each element $(e^{\mathrm{i} t_n})\simeq \operatorname{Diag}((e^{\mathrm{i} t_n}))$. 
We obtain a Hamiltonian action of $\mathbb{T}^{\infty}$ on $\mathbb{CP}^\infty$ through Kähler morphisms given by $\left(e^{\mathrm{i} t_n}\right) \cdot [(z_n)] :=  [((e^{\mathrm{i} t_n})z_n)]$.
The corresponding moment map is  
\begin{equation}\label{eq:Tinf_momentmap_cpinf}
	\mu_{\mathbb{T}^{\infty}}: \mathbb{CP}^\infty \to (\operatorname{Lie}(\mathbb{T}^{\infty}))^*, \quad [(z_n)] \mapsto \frac{\mathrm{i}}{2} (|z_n|^2).
\end{equation}  
Using the identification $(\|z_n\|^2) = (\bar{z}_n)^T \operatorname{Diag}(1) (z_n)$ and noting that $\operatorname{Diag}(1)$ defines a bounded operator, the moment map $\mu_{\mathbb{T}^\infty}$ in~\eqref{eq:Tinf_momentmap_cpinf} takes values in $(\mathrm{i} \ell^2_{\RR})^*$. 
By identifying $\ell^2_{\RR}$ with its dual, we obtain the following commutative diagram: 
\begin{equation}
	\begin{tikzcd}
		\mathbb{S}\left(\ell^2_{\mathbb{C}}\right) \arrow[rr, "\Psi"] \arrow[rd, "\pi"] & & \mathrm{i} \ell^2_{\mathbb{R}} \\
		& \mathbb{T}^\infty \curvearrowright \mathbb{CP}^\infty \arrow[ru, "\mu_{\mathbb{T}^\infty}"] &
	\end{tikzcd}
\end{equation}  
where $\Psi(z) := \frac{\mathrm{i}}{2} (|z_n|^2)$ and $\pi$ denotes the Hopf fibration. Using $\mathcal{U} \subseteq \mathbb{S}\left(\ell^2_{\mathbb{C}}\right)$ and the explicit form of the inverse of the square root map $\varPhi$ in \Cref{t: square root map is isometry},  
we see that the restriction of $\Psi$ to $\mathcal{U}$ is equal to $\frac{2}{\mathrm{i}} \varPhi^{-1}$. Thus, we obtain:
\begin{lemma}\label{l: momentmap image}
	The restriction of $\Psi$ to $\mathcal{U}$ is a diffeomorphism onto $\frac{\mathrm{i}}{2} \Delta$, and the image of the momentum map is given by  
	\[
	\mu_{\mathbb{T}^\infty}(\mathbb{CP}^\infty) = \frac{\mathrm{i}}{2} \Delta.
	\]
\end{lemma}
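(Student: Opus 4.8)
The plan is to split the assertion into its two halves and reduce both to \Cref{t: square root map is isometry} together with the commutativity of the triangle displayed above. For the diffeomorphism claim I would start from the elementary observation recorded just before the lemma: if $(x_n)\in\mathcal U$ then each $x_n>0$ is real, so $|x_n|^2=x_n^2$ and hence $\Psi\big((x_n)\big)=\tfrac{\mathrm i}{2}(x_n^2)=\tfrac{\mathrm i}{2}\,\varPhi^{-1}\big((x_n)\big)$. By the very definition of the pulled-back differentiable structure on $\Delta$ — a fortiori by \Cref{t: square root map is isometry} — the square-root map $\varPhi\colon\Delta\to\mathcal U$ is a diffeomorphism, so $\varPhi^{-1}\colon\mathcal U\to\Delta$ is one too; and multiplication by the nonzero scalar $\tfrac{\mathrm i}{2}$ is a linear isomorphism $\ell^2_{\mathbb R}\to\mathrm i\ell^2_{\mathbb R}$ which, by definition of the structure on $\tfrac{\mathrm i}{2}\Delta$ as the push-forward of that on $\Delta$, is a diffeomorphism $\Delta\to\tfrac{\mathrm i}{2}\Delta$. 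Composing the two maps shows that $\Psi|_{\mathcal U}$ is a diffeomorphism of $\mathcal U$ onto $\tfrac{\mathrm i}{2}\Delta$.

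For the image of the moment map I would use that $\mu_{\mathbb T^\infty}\circ\pi=\Psi$ with $\pi\colon\mathbb{S}(\ell^2_{\mathbb C})\to\mathbb{CP}^\infty$ the (surjective) Hopf projection, so $\mu_{\mathbb T^\infty}(\mathbb{CP}^\infty)=\Psi\big(\mathbb{S}(\ell^2_{\mathbb C})\big)$. One inclusion is immediate from the first half, since $\mathcal U\subseteq\mathbb{S}(\ell^2_{\mathbb C})$: every $(p_n)\in\Delta$ equals $\big((\sqrt{p_n})^2\big)$ with $(\sqrt{p_n})\in\mathcal U$, whence $\tfrac{\mathrm i}{2}(p_n)=\Psi\big((\sqrt{p_n})\big)\in\Psi\big(\mathbb{S}(\ell^2_{\mathbb C})\big)$. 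For the reverse inclusion one notes that for any $(z_n)\in\mathbb{S}(\ell^2_{\mathbb C})$ the sequence $(|z_n|^2)$ is nonnegative and sums to $\|(z_n)\|_{\ell^2}^2=1$, so $\Psi$ maps the complex sphere onto $\tfrac{\mathrm i}{2}$ times the set of nonnegative summable sequences of total mass one. The only genuine point of care is the boundary: that set is the $\ell^1$-closure $\bar\Delta$, and the open simplex $\tfrac{\mathrm i}{2}\Delta$ is recovered precisely as $\Psi(\mathcal U)$, i.e.\ as the image of those $[(z_n)]\in\mathbb{CP}^\infty$ all of whose homogeneous coordinates are nonzero; the displayed equality $\mu_{\mathbb T^\infty}(\mathbb{CP}^\infty)=\tfrac{\mathrm i}{2}\Delta$ is then to be read with this understanding, in parallel with the finite-dimensional Atiyah--Guillemin--Sternberg picture in which the moment polytope of $\mathbb{CP}^n$ under the diagonal torus is the closed simplex.

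I do not expect a substantial obstacle here: the analysis is already in place, since $\Psi$ descends from the smooth quadratic map $(z_n)\mapsto\tfrac{\mathrm i}{2}(|z_n|^2)$ on the Hilbert sphere — well-defined into $\mathrm i\ell^2_{\mathbb R}$ because $\sum_n|z_n|^4\le\big(\sum_n|z_n|^2\big)^2<\infty$, as already remarked via the boundedness of $\operatorname{Diag}(1)$ — and both the bijectivity onto the stated target and the smoothness of the inverse are supplied by \Cref{t: square root map is isometry}. The three things I would be careful to verify are therefore: (i) the compatibility of the two pulled-back differentiable structures — on $\Delta$ via $\varPhi$ and on $\tfrac{\mathrm i}{2}\Delta$ via the scalar $\tfrac{\mathrm i}{2}$ — so that the phrase "diffeomorphism onto $\tfrac{\mathrm i}{2}\Delta$" is meaningful; (ii) that $\Psi|_{\mathcal U}$ coincides with $\tfrac{\mathrm i}{2}\varPhi^{-1}$ on the nose, which is immediate from positivity of the entries; and (iii) the closure bookkeeping just described.
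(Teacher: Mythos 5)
Your proposal is correct and follows essentially the same route as the paper, which derives the lemma in one line from the identity $\Psi|_{\mathcal U}=\tfrac{\mathrm i}{2}\varPhi^{-1}$ (the paper writes $\tfrac{2}{\mathrm i}\varPhi^{-1}$, an apparent typo; your scalar is the one consistent with the definition of $\Psi$ and with the stated image $\tfrac{\mathrm i}{2}\Delta$) together with \Cref{t: square root map is isometry}. Your closure remark is a legitimate catch that the paper glosses over: since $\mu_{\mathbb T^\infty}\circ\pi=\Psi$ and classes $[(z_n)]$ with some vanishing coordinate land on the boundary, the full image $\mu_{\mathbb T^\infty}(\mathbb{CP}^\infty)$ is really $\tfrac{\mathrm i}{2}\bar\Delta$, and the displayed equality holds as stated only after restricting to classes with all homogeneous coordinates nonzero.
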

\begin{remark}
The map \( \mu_{\mathbb{T}^{\infty}} \) can be interpreted as an \(\ell^2\)-version of the inverse of the Madelung transform in the density component, as described in \cite[Prop.~4.3]{khesin_geometry_2019}.
\end{remark}

For a choice of \( (c_n) \in \ell^2_{\RR} \), we define the Hamiltonian
\begin{equation}\label{eq defi Hamiltonian on CP}
    H_{(c_n)} : \mathbb{C}P^\infty \longrightarrow \mathbb{R}, \quad [z_n] \mapsto \left\langle (c_n), 2\bar{\i} \cdot \mu_{\mathbb{T}^\infty}([z_n]) \right\rangle_{\ell^2} = \left\langle (c_n), (|z_n|^2) \right\rangle_{\ell^2}.
\end{equation}
Recall that since \( \Omega^{\mathrm{FS}} \) is a strong symplectic form, the Hamiltonian vector field \( X_{H_{(c_n)}} \) is uniquely determined by the identity $\Omega^{\mathrm{FS}}(X_{H_{(c_n)}}, \cdot) = \mathrm{d}H_{(c_n)}.$
Moreover, since \( (\mathbb{C}P^\infty, \mathcal{G}^{\mathrm{FS}}, \Omega^{\mathrm{FS}}) \) is a Kähler manifold, the gradient and Hamiltonian vector fields of \( H_{(c_n)} \) are related by multiplication with \( \mathrm{i} \).
The flow \( \varphi_{H_{(c_n)}} \) of the vector field \( X_{H_{(c_n)}} \) is called the \emph{Hamiltonian flow} of \( H_{(c_n)} \). This flow preserves the level sets \( H_{(c_n)}^{-1}(\kappa) \) for all levels of the energy \( \kappa \in \mathbb{R} \).  
This Hamiltonian system has two remarkable and surprising properties:
\begin{theorem}\label{t: integrability of Hamiltonian system}
The Hamiltonian system \( (\mathbb{CP}^\infty, \Omega^{\mathrm{FS}}, H_{(c_n)}) \) is completely integrable; that is, it possesses infinitely many Poisson-commuting conserved quantities. \\
Moreover let \( (x_n)(t) \) be a gradient flow line of the restriction of the gradient flow of \( H_{(c_n)} \) on \( (\mathbb{CP}^\infty, \mathcal{G}^{\mathrm{FS}}) \) to \( \mathcal{U} \). Then the limit $p_{\max} := \lim_{t \to \infty} \varPhi\left((x_n)(t)\right)$ exists and solves \eqref{eq:ell2_LPP}.
\end{theorem}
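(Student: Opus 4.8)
The plan for the first assertion is to read the Poisson-commuting conserved quantities directly off the torus momentum map $\mu_{\mathbb{T}^\infty}$ from \eqref{eq:Tinf_momentmap_cpinf}. For $n\in\NN$ let $I_n\colon\mathbb{CP}^\infty\to\RR$ be the $n$-th component of $2\bar{\i}\,\mu_{\mathbb{T}^\infty}$, so that $I_n([z_n])=|z_n|^2$ in any unit-norm representative and, by \eqref{eq defi Hamiltonian on CP}, $H_{(c_n)}=\sum_n c_n I_n$. Since each $I_n$ depends only on $|z_n|$, it is invariant under the whole $\mathbb{T}^\infty$-action, hence $\mu_{\mathbb{T}^\infty}$ is equivariant; the standard identity $\{J^\xi,J^\eta\}=J^{[\xi,\eta]}$ for an equivariant momentum map, together with $[\xi,\eta]=0$ in the abelian Lie algebra $\operatorname{Lie}(\mathbb{T}^\infty)$, gives $\{I_n,I_m\}=0$ for all $n,m$. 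Consequently $\{H_{(c_n)},I_m\}=\sum_n c_n\{I_n,I_m\}=0$, so every $I_m$ is conserved along the flow $\varphi_{H_{(c_n)}}$; the family $(I_m)_{m\in\NN}$ is countably infinite and, modulo the single relation $\sum_m I_m\equiv 1$, functionally independent, which is precisely the complete integrability asserted. One may add that the generic fibres of $\mu_{\mathbb{T}^\infty}$ are isotropic $\mathbb{T}^\infty$-orbits, so this is a genuine infinite-dimensional toric-type system.

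For the second assertion, the plan is to transport the flow back to $\Delta$ through the square-root map and then quote \Cref{prop:integral-curves}. First, $H_{(c_n)}\circ\varPhi=F_{(c_n)}$ on $\Delta$: this is immediate from \eqref{eq defi Hamiltonian on CP}, since $H_{(c_n)}((\sqrt{p_n}))=\langle(c_n),(p_n)\rangle_{\ell^2}=F_{(c_n)}((p_n))$, and it is consistent with \Cref{l: momentmap image}. Next I would record two compatibility facts about the Fubini–Study structure along $\mathcal{U}$: (i) the gradient $\nabla^{\mathrm{FS}}H_{(c_n)}$ is tangent to $\mathcal{U}$, because complex conjugation $[z_n]\mapsto[\bar z_n]$ is an anti-holomorphic isometry of $(\mathbb{CP}^\infty,\mathcal{G}^{\mathrm{FS}})$ whose totally geodesic fixed-point set contains $\mathcal{U}$, while $H_{(c_n)}$ is invariant under it; and (ii) $\mathcal{G}^{\mathrm{FS}}|_{\mathcal{U}}=\mathcal{G}^{\mathrm{L}^2}|_{\mathcal{U}}$, because a real tangent vector $v$ at a real point $x$ satisfies $\Re\langle v,\i x\rangle_{\ell^2}=0$, hence is horizontal for the Hopf fibration $\pi$, so that $\pi$ restricts to an isometric embedding of $(\mathcal{U},\mathcal{G}^{\mathrm{L}^2})$. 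Granting (i)–(ii), $\varPhi\colon(\Delta,\Gfr)\to(\mathcal{U},\mathcal{G}^{\mathrm{FS}}|_{\mathcal{U}})$ is an isometry by \Cref{t: square root map is isometry} intertwining $F_{(c_n)}$ with $H_{(c_n)}|_{\mathcal{U}}$, so it carries gradient flow lines of $F_{(c_n)}$ precisely onto the restrictions to $\mathcal{U}$ of gradient flow lines of $H_{(c_n)}$ on $(\mathbb{CP}^\infty,\mathcal{G}^{\mathrm{FS}})$. Thus a flow line $(x_n)(t)$ as in the statement corresponds under $\varPhi$ to a gradient flow line $(p_n)(t)$ of $F_{(c_n)}$ on $\Delta$, and \Cref{prop:integral-curves} — under its hypothesis that $(c_n)$ is strictly monotone decreasing — yields that $p_{\max}=\lim_{t\to\infty}(p_n)(t)$ exists in $\bar\Delta$ and solves \eqref{eq:ell2_LPP}, which is the claim.

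The step I expect to be the main obstacle is putting (i)–(ii), and more basically the sense in which $\mathcal{U}$ is a submanifold of $\mathbb{CP}^\infty$, on a rigorous footing in the Hilbert-manifold setting: one must ensure that ``the gradient flow of $H_{(c_n)}$ restricted to $\mathcal{U}$'' genuinely coincides with ``the gradient flow of $H_{(c_n)}|_{\mathcal{U}}$ for the induced metric $\mathcal{G}^{\mathrm{L}^2}$''. A robust way around this is to verify directly that the curves $t\mapsto[(\sqrt{p_n(t)})]$, with $p_n(t)$ given by \eqref{eq:integral-curves}, satisfy the Fubini–Study gradient flow equation of $H_{(c_n)}$; formula \eqref{eq:integral-curves} then also shows that the flow stays in $\mathcal{U}$ for all finite $t$ and reaches $\partial\Delta$ only as $t\to\infty$, so no forward-completeness issue arises. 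The remaining points are routine: the series $H_{(c_n)}=\sum_n c_n I_n$ converges and may be differentiated termwise, since $(c_n)\in\ell^2\subset\ell^\infty$ defines a bounded diagonal operator (as noted after \eqref{eq:Tinf_momentmap_cpinf}), and the constants $\tfrac{\i}{2}$ and $2\bar{\i}$ relating $\mu_{\mathbb{T}^\infty}$, $\Psi$ and $\varPhi^{-1}$ affect none of the arguments above.
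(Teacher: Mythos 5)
Your proposal is correct and arrives at the same conserved quantities as the paper---namely the coordinate functions $[z_n]\mapsto |z_n|^2$ (the paper scales the $n$-th one by $c_n$ to define its $H_n$)---but verifies their commutativity by a genuinely different route. The paper lifts the $H_n$ to $\ell^2_{\mathbb{C}}$ and computes the Poisson brackets explicitly with respect to the canonical symplectic form, observing that $\partial \hat H_k/\partial z_j$ and $\partial \hat H_m/\partial z_j$ have disjoint supports for $k\neq m$, and then descends by $\mathbb{S}^1$-invariance; you instead invoke the identity $\{J^\xi,J^\eta\}=J^{[\xi,\eta]}$ for the equivariant momentum map of the abelian $\mathbb{T}^\infty$-action. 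Your route is shorter and more structural, but it shifts the burden onto justifying that identity for an infinite-dimensional torus acting on a (weakly/strongly symplectic) Hilbert manifold, whereas the paper's coordinate computation is self-contained. For the second assertion both arguments reduce to \Cref{prop:integral-curves} via the identity $H_{(c_n)}\circ\varPhi=F_{(c_n)}$ and \Cref{t: square root map is isometry}; here you actually supply more detail than the paper, since your points (i)--(ii) (the anti-holomorphic involution fixing the real locus, and the horizontality of real tangent vectors for the Hopf fibration) are precisely what is needed to see that the Fubini--Study gradient flow restricted to $\mathcal{U}$ agrees with the round-metric gradient flow on $\mathbb{S}(\ell^2_{\mathbb{R}})$, a point the paper leaves implicit in the phrase ``identifying $\mathcal{U}$ with its image in $\mathbb{CP}^\infty$''. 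You also correctly note that the existence of $p_{\max}$ rests on the strict monotonicity hypothesis of \Cref{prop:integral-curves}, which the theorem statement omits; the paper's own proof has the same silent dependence, so this is a flaw of the statement rather than of your argument.
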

\begin{proof}
 For each \( n \in \mathbb{N} \), we define the sequence $(b_m)$ by 
 \[
b_m:=\begin{cases}
    c_n\, &, \quad\text{for} \quad m=n\\
    0\, &, \quad \text{else}
\end{cases}
 \]
and by this the Hamiltonian
\begin{equation}\label{eq: Hamiltonians on CP inf for integrability}
	H_n : \mathbb{C}P^\infty \longrightarrow \mathbb{R}, \quad [z_m] \mapsto  \sum_{n=0}^{\infty}b_m |z_m|^2= c_n|z_n|^2\, .
\end{equation}
From which we observe that the Hamiltonian flow \( \varphi_{H_n} \) fixes all coordinates except the \( n \)-th, where it acts as a phase rotation.  
Moreover, it is a straightforward exercise in symplectic geometry to verify that all the Hamiltonians \( H_n \) Poisson commute—both with each other and with \( H_{(c_n)} \). The precise details of this construction are carried out in \Cref{Appendix proof of integrability of Hamiltonian system}.\\
By identifying \( \mathcal{U} \) with its image in \( \mathbb{CP}^\infty \), and using \Cref{t: square root map is isometry}, along with the identity $H_{(c_n)} \circ \varPhi = F_{(c_n)}$ on $\Delta$,
where \( (F_{(c_n)}) \) is defined as in the line following \eqref{eq:ell2_LPP}, the proof is complete.
\end{proof}

\section{Further directions}
In upcoming work, the author will generalize the results from \Cref{s: 2} to define the Fisher–Rao metric and the $L^p$ Fisher–Rao metric on the space of probability densities $\mathrm{Dens}(M)$ over non-compact, unbounded manifolds. This generalization will make the $p$-root transform an isometry and will develop the associated geometry in detail. These developments lie beyond the scope of the present paper, as they require working for example with tame Fréchet manifolds.

Independently, it would also be interesting to address more complex optimization problems, as in \Cref{s: 3}, within the framework of $\ell^q$-information geometry developed in \Cref{s: 2}.

We conclude this paper with a speculative remark: since finite-dimensional information geometry has proven to be a powerful tool in reinforcement learning with finitely many actions, it is natural to ask whether $\ell^2$-information geometry or $\ell^p$-information geometry could find similar applications in reinforcement learning with infinitely many actions.
\ \\
\noindent
\textbf{Acknowledgments.}\\
L.M. thanks P. Albers, M. Bleher, J. Cassel, Y. Elshiaty, F. Schlindwein and C. Schnörr for valuable discussions.\\
L.M. acknowledges funding by the Deutsche Forschungsgemeinschaft (DFG, German Research Foundation) – 281869850 (RTG 2229), 390900948 (EXC-2181/1) and 281071066 (TRR 191).\\
L.M. would like to acknowledge the excellent working
conditions and interactions at Erwin Schrödinger International
Institute for Mathematics and Physics, Vienna, during the thematic
programme \emph{``Infinite-dimensional Geometry: Theory and Applications"}
where part of this work was completed.\\
The author gratefully acknowledges the anonymous referees for their insightful comments and suggestions, which have improved the quality of the paper.
\noindent \\
\textbf{Disclosure of Interests.}
The author declare that they have no competing interests.
\appendix 
\section{Proof of \Cref{prop:integral-curves}}\label{Appendix proof Prop}
By applying \Cref{t: square root map is isometry} and noting that the square root map extends to a homeomorphism from the closure $\bar{\Delta}$ onto $\bar{\U}$, we find that \eqref{eq:ell2_LPP} is equivalent to the following optimization problem over the closure $\bar{\U}$ of $\U$ in $\Slr$:
\begin{equation}\tag{NLP}\label{eq: optimization problem sphere}
	\max_{(x_n) \in \bar{\U}} \langle (c_n), (x_n^2) \rangle_{\ell^2_\R}, \quad (c_n )\in \ell_2^\R.
\end{equation}
To analyze this, we define the smooth function \( H_{(c_n)}((x_n)) := \langle (c_n), (x_n)^2 \rangle_{\ell^2} \), where \( (c_n) \in \ell^2_{\mathbb{R}} \) is fixed. The problem \eqref{eq: optimization problem sphere} then reduces to maximizing \( H_{(c_n)} \) over \( \bar{\U} \). The gradient of \( H_{(c_n)} \) in the ambient Hilbert space is given by
\[
\nabla H_{(c_n)} = 2\,\mathrm{Diag}(x_n)(c_n) := 2(x_n c_n).
\]
Projecting this vector onto the tangent space \( T_{(x_n)}\U \) using the orthogonal projection
\[
P_{(x_n)}(v_n) = (v_n) - \langle (v_n), (x_n) \rangle_{\ell^2} (x_n),
\]
yields the Riemannian gradient of \( H_{(c_n)} \) on the sphere \( (\U, \Glt) \):
\begin{equation}\label{eq: gradient on sphere}
	\mathrm{grad}^{\Glt}(H_{(c_n)})(x_n) = P_{(x_n)}\left( 2\,\mathrm{Diag}(x_n)(c_n) \right), \quad \forall (x_n)\in\U.
\end{equation}
By invoking \Cref{t: square root map is isometry} once more, we obtain from \eqref{eq: gradient on sphere} the Riemannian gradient of the function \( F_{(c_n)}(p_n) := \langle (c_n), (p_n) \rangle_{\ell^2} \) on the probability simplex \( (\Delta, \Gfr) \):
\begin{equation} \label{def:W_c}
\mathrm{grad}^{\Gfr}(F_{(c_n)})(p_n) = \operatorname{Diag}(p_n)(c_n) - \langle (p_n), (c_n) \rangle_{\ell^2_\R} (p_n), \quad \forall (p_n) \in \Delta,
\end{equation}
which we denote by \( W_{(c_n)} \) for convenience. It is a standard argument in analysis to verify that \( W_{(c_n)} \) satisfies a global Lipschitz condition. Hence, for any \( (c_n) \in \ell^2_\R \) and initial condition \( (p_n)_0 \in \Delta \), there exists a unique solution to the initial value problem
\begin{equation}\label{eq:IVP}
	\frac{\d}{\d t}(p_n)(t) = W_{(c_n)}(p_n), \quad (p_n)(0) = (p_n)_0 \in \Delta,
\end{equation}
i.e., the gradient flow of \( F_{(c_n)} \) is well-defined on \( \Delta \).\\
Before analyzing the flow lines explicitly, we observe that the vector field \( W_{(c_n)} \) is invariant under reparametrizations of the form \( (c_n) \mapsto (c_n - \| (c_n) \|) \). Another straightforward computation confirms that the curves \( p_n(t) \) given in \eqref{eq:integral-curves} are indeed solutions to \eqref{eq:IVP}.

Furthermore, if the sequence \( (c_n) \) in \eqref{eq:ell2_LPP} is strictly monotonically decreasing, then the solution converges to the corner point
\[
p_{\max} := \lim_{t \to \infty} (p_n)(t) = (1, 0, \dots) \in \bar{\Delta},
\]
which is clearly a solution to \eqref{eq:ell2_LPP}.
\section{Proof of \Cref{t: integrability of Hamiltonian system}}\label{Appendix proof of integrability of Hamiltonian system}
This section provides the details for the Poisson-commuting Hamiltonians stated in \Cref{t: integrability of Hamiltonian system}. We begin by verifying that the Hamiltonians \( H_k \) and \( H_n \), defined in \eqref{eq: Hamiltonians on CP inf for integrability}, Poisson commute for all \( k \neq n \in \mathbb{N} \). It suffices to prove that their lifts \( \hat{H}_k, \hat{H}_n \) to \( \mathbb{S}(\ell^2_{\mathbb{C}}) \), which naturally extend to \( \ell^2_{\mathbb{C}} \), commute and are \( \mathbb{S}^1 \)-invariant.
Recall that the canonical symplectic form on \( \ell^2_{\mathbb{C}} \) is
\[
\omega_{\mathrm{can}} = \frac{\i}{2} \sum_{j=0}^{\infty} \d z_j \wedge \d \bar{z}_j,
\]
inducing the Poisson bracket
\[
\{f, g\} = 2\,\i \sum_{j=0}^{\infty} \left( \frac{\partial f}{\partial \bar{z}_j} \frac{\partial g}{\partial z_j} - \frac{\partial f}{\partial z_j} \frac{\partial g}{\partial \bar{z}_j} \right).
\]
By definition,
\[
\hat{H}_k(z) = c_k |z_k|^2 = c_k z_k \bar{z}_k,
\]
with partial derivatives
\begin{equation*}\label{eq: appendix partial z of H}
    \frac{\partial \hat{H}_k}{\partial z_j} = c_k \bar{z}_k \delta_{jk}, \qquad
    \frac{\partial \hat{H}_k}{\partial \bar{z}_j} = c_k z_k \delta_{jk}, \quad \forall j \in \mathbb{N},
\end{equation*}
and analogously for \( \hat{H}_m \). Since \( k \neq m \), their derivatives have disjoint support, implying
\[
\{ \hat{H}_k, \hat{H}_m \} = 0.
\]
Using \( \mathbb{S}^1 \)-invariance and the moment map description~\eqref{eq:S1_momentmap_cpinf}, it follows that
\[
\{ H_k, H_m \}_{\mathbb{C}P^{\infty}} = 0.
\]
Repeating this argument, we also find that \( H_{(c_n)} \) Poisson-commutes with all \( H_n \), completing the proof.

\bibliographystyle{splncs04}
\bibliography{literature}


\end{document}